	\let\over=\@@over \let\overwithdelims=\@@overwithdelims
	\let\atop=\@@atop \let\atopwithdelims=\@@atopwithdelims
  	\let\above=\@@above \let\abovewithdelims=\@@abovewithdelims
\tikzstyle{int}=[draw, fill=blue!20, minimum size=2em]
\tikzstyle{dot}=[circle, draw, fill=blue!20, minimum size=2em]
\tikzstyle{init} = [pin edge={to-,thin,black}]
\newcommand{\mreals}{\ensuremath{\mathbb{R}}}
	\newcommand{\eqref}[1]{~(\ref{#1})}
	\def\mod{\mathop{\rm mod}}
\def\EE{\Expect}
\def\PP{\mathbb{P}}
\def\eqdef{\triangleq}
\newcommand{\reals}{\mathbb{R}}
\newcommand{\complex}{\mathbb{C}}
\newcommand{\Expect}{\mathbb{E}}
\definecolor{myblue}{rgb}{.8, .8, 1}
\definecolor{mathblue}{rgb}{0.2472, 0.24, 0.6} 
\definecolor{mathred}{rgb}{0.6, 0.24, 0.442893}
\definecolor{mathyellow}{rgb}{0.6, 0.547014, 0.24}
\newcommand{\calN}{{\mathcal{N}}}
\def\unifto{\mathop{{\mskip 3mu plus 2mu minus 1mu%
	\setbox0=\hbox{$\mathchar"3221$}%
	\raise.6ex\copy0\kern-\wd0%
	\lower0.5ex\hbox{$\mathchar"3221$}}\mskip 3mu plus 2mu minus 1mu}}
\def\simleq{{{\mskip 3mu plus 2mu minus 1mu%
	\setbox0=\hbox{$\mathchar"013C$}%
	\raise.2ex\copy0\kern-\wd0%
	\lower0.9ex\hbox{$\mathchar"0218$}}\mskip 3mu plus 2mu minus 1mu}}
\def\simleq{\lesssim}
\def\simgeq{{{\mskip 3mu plus 2mu minus 1mu%
	\setbox0=\hbox{$\mathchar"013E$}%
	\raise.2ex\copy0\kern-\wd0%
	\lower0.9ex\hbox{$\mathchar"0218$}}\mskip 3mu plus 2mu minus 1mu}}
\def\simgeq{\gtrsim}
\newtheorem{theorem}{Theorem}
\newtheorem{corollary}[theorem]{Corollary}
\newtheorem{proposition}[theorem]{Proposition}
\theoremstyle{definition}
\newtheorem{remark}{Remark}
\newif\ifmapx
\edef\jobnametmp{\expandafter\string\csname gaussian_approx_apx\endcsname}
\edef\jobnameapx{\expandafter\mkillslash\jobnametmp}
\edef\jobnameexpand{\jobname}
\renewcommand{\tilde}{\widetilde}
\begin{document}
\ifpdf
\DeclareGraphicsExtensions{.pgf}
\graphicspath{{figures/}{plots/}}
\fi

\title{Note on approximating the Laplace transform of a Gaussian on a complex disk}

\author{Yury Polyanskiy and Yihong Wu\thanks{Y.P. is with the Department of EECS, MIT, Cambridge, MA, email: \url{yp@mit.edu}. Y.W. is with
the Department of Statistics and Data Science, Yale University, New Haven, CT, email: \url{yihong.wu@yale.edu}.}}

\maketitle

\begin{abstract}

In this short note we study how well a Gaussian distribution can be approximated by distributions supported on $[-a,a]$.
Perhaps, the natural conjecture is that for large $a$ the almost optimal choice is
given by truncating the Gaussian to $[-a,a]$. Indeed, such approximation achieves the optimal rate of $e^{-\Theta(a^2)}$ in
terms of the $L_\infty$-distance between characteristic functions. However, if we consider the $L_\infty$-distance between
Laplace transforms on a complex disk, the optimal rate is $e^{-\Theta(a^2 \log a)}$, while truncation still
only attains $e^{-\Theta(a^2)}$. The optimal rate can be attained by the Gauss-Hermite quadrature. As corollary, we also
construct a ``super-flat'' Gaussian mixture of $\Theta(a^2)$ components with means in $[-a,a]$ and
whose density has all derivatives bounded by $e^{-\Omega(a^2 \log(a))}$ in the $O(1)$-neighborhood of the origin.
\end{abstract}

\section{Approximating the Gaussian}

We study the best approximation of a Gaussian distribution by compact support measures, in the
sense of the uniform approximation of the Laplace transform on a complex disk.
Let $L_\pi(z) = \int_\mreals d\pi(y) e^{zy}$ be the Laplace transform, $z \in \mathbb{C}$, of the
measure $\pi$ and $\Psi_\pi(t) \eqdef L_{\pi}(it)$ be its characteristic function. Denote $L_0(z) = e^{z^2/2}$ and
$\Psi_0(t) = e^{-t^2/2}$ the Laplace transform and the characteristic function corresponding to the standard Gaussian $\pi_0 =
\mathcal{N}(0,1)$ with density
$$ \phi(x) \eqdef {1\over \sqrt{2\pi}} e^{-x^2/2}\,, \quad x \in \mreals\,.$$

How well can a measure $\pi_1$ with support on $[-a,a]$ approximate $\pi_0$? Perhaps the most natural
choice for $\pi_1$ is the truncated $\pi_0$:
\begin{equation}\label{eq:pi_trunc}
	\pi_1(dx) \eqdef \phi_a(x) dx, \qquad \phi_a(x) \triangleq {\phi(x) \over 1-2Q(a)} 1\{|x| \le a\}\,,
\end{equation}
where $Q(a) = \PP[\mathcal{N}(0,1) > a]$.
Indeed, truncation is asymptotically optimal (as $a\to\infty$) 
in approximating the characteristic function, as made preicse by the following result:

\begin{proposition} There exists some $c>0$ such that for all $a\ge1$ and any probability measure $\pi_1$ supported on $[-a,a]$ we have
	\begin{equation}\label{eq:cf_0}
		\sup_{t\in \mreals} |\Psi_{\pi_1}(t) - e^{-t^2/2}| \ge c e^{-c a^2}\,.
	\end{equation}	
	Furthermore, truncation~\eqref{eq:pi_trunc} satisfies (for $a\ge 1$)
	\begin{equation}\label{eq:cf_1}
		\sup_{t\in \mreals} |\Psi_{\pi_1}(t) - e^{-t^2/2}| \le 2 e^{-a^2/2}\,.
	\end{equation}	
\end{proposition}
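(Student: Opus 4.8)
The plan is to prove the two inequalities separately; the upper bound \eqref{eq:cf_1} is a short computation, while the lower bound \eqref{eq:cf_0} carries the content. For \eqref{eq:cf_1}, write $c_a \eqdef 1-2Q(a)$, so that the truncated density is $\phi_a = \phi\cdot\mathbf{1}\{|x|\le a\}/c_a$, and split the Gaussian characteristic function as $e^{-t^2/2} = I(t)+J(t)$, where $I(t)=\int_{-a}^{a} e^{itx}\phi(x)\,dx$ and $J(t)=\int_{|x|>a} e^{itx}\phi(x)\,dx$. Since $\Psi_{\pi_1}(t)=I(t)/c_a$, one has
\[ \Psi_{\pi_1}(t)-e^{-t^2/2} \;=\; I(t)\,\frac{1-c_a}{c_a} - J(t) \;=\; \frac{2Q(a)}{c_a}\,I(t) - J(t). \]
Bounding $|I(t)|\le \int_{-a}^a\phi = c_a$ and $|J(t)|\le \int_{|x|>a}\phi = 2Q(a)$, the factor $c_a$ cancels and $\sup_t|\Psi_{\pi_1}(t)-e^{-t^2/2}|\le 4Q(a)$. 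One then invokes the elementary tail estimate $Q(a)\le \tfrac12 e^{-a^2/2}$, valid for all $a\ge0$ (e.g.\ because $g(a)\eqdef \tfrac12 e^{-a^2/2}-Q(a)$ has $g(0)=g(\infty)=0$ and $g'$ changes sign exactly once on $(0,\infty)$, hence $g$ is unimodal and $\ge 0$), which gives $4Q(a)\le 2e^{-a^2/2}$.

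For \eqref{eq:cf_0} the idea is Fourier duality: a test function supported strictly outside $[-a,a]$ integrates to $0$ against $\pi_1$ but to $e^{-\Theta(a^2)}$ against $\pi_0$, and that discrepancy is controlled by $\sup_t|\Psi_{\pi_1}(t)-\Psi_0(t)|$. Fix once and for all a nonnegative $h_0\in C_c^\infty(\mreals)$ with $\supp h_0\subseteq[1,3]$ and $h_0\equiv 1$ on $[3/2,5/2]$, and set $h(x)\eqdef h_0(x-a)$; then $\supp h\subseteq[a+1,a+3]$ is disjoint from $[-a,a]$ and, crucially, $\|\hat h\|_1=\|\hat h_0\|_1\eqdef C_0<\infty$ is independent of $a$. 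By Fourier inversion (legitimate since $h,\hat h\in L^1$) and Fubini, $\int h\,d\pi=\tfrac{1}{2\pi}\int \hat h(\xi)\,\Psi_\pi(\xi)\,d\xi$ for any probability measure $\pi$; applying this to $\pi_1$ and $\pi_0$ and using $\int h\,d\pi_1=0$,
\[ \int h(x)\phi(x)\,dx \;=\; \Bigl|\int h\,d\pi_0 - \int h\,d\pi_1\Bigr| \;=\; \frac{1}{2\pi}\Bigl|\int \hat h(\xi)\bigl(\Psi_{\pi_1}(\xi)-e^{-\xi^2/2}\bigr)\,d\xi\Bigr| \;\le\; \frac{C_0}{2\pi}\,\sup_{t\in\mreals}\bigl|\Psi_{\pi_1}(t)-e^{-t^2/2}\bigr|. \]
Finally $\int h\phi \ge \int_{a+3/2}^{a+5/2}\phi \ge \phi(a+5/2)=\tfrac{1}{\sqrt{2\pi}}e^{-(a+5/2)^2/2}\ge \tfrac{1}{\sqrt{2\pi}}e^{-\frac{49}{8}a^2}$ for $a\ge1$ (as $(a+5/2)^2\le\frac{49}{4}a^2$ there), so $\sup_t|\Psi_{\pi_1}(t)-e^{-t^2/2}|\ge \tfrac{\sqrt{2\pi}}{C_0}\,e^{-\frac{49}{8}a^2}$, which is $\ge c\,e^{-ca^2}$ on $a\ge1$ once $c$ is enlarged suitably — a routine adjustment of the constant.

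The only genuine obstacle is recognizing that \eqref{eq:cf_0} is forced by the structural mismatch between a compactly supported measure and a Gaussian, and packaging this through Fourier duality. The two technical points to handle with care are: (i) taking $h$ as a translate of a fixed bump, so that $\|\hat h\|_1$ does not blow up as $a\to\infty$; and (ii) keeping $h$ smooth enough that $\hat h\in L^1$, which legitimizes the inversion formula and the interchange of integrals. Everything else is elementary; note in particular that the lower bound never uses that $\pi_1$ is a probability measure, only that $\supp\pi_1\subseteq[-a,a]$.
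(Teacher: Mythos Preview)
Your proof is correct. The upper bound \eqref{eq:cf_1} is handled the same way as in the paper --- both arguments reduce to $\sup_t|\Psi_{\pi_1}(t)-e^{-t^2/2}|\le 4Q(a)\le 2e^{-a^2/2}$, the only cosmetic difference being that you spell out the decomposition $e^{-t^2/2}=I(t)+J(t)$ explicitly while the paper phrases it as a total-variation bound.

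For the lower bound \eqref{eq:cf_0}, however, your approach is genuinely different. The paper works in the complex domain: it sets $B(z)=L_{\pi_1}(z)-e^{z^2/2}$, observes that $b(r)=\sup_{\Re z=r}|B(z)|$ is large on the line $\Re z=3a$ (since $e^{r^2/2}$ dominates $e^{ar}$ there) and at most $2e^{r^2/2}$ on $\Re z=6a$, and then invokes the Hadamard three-lines theorem to interpolate and force $b(0)=\sup_t|\Psi_{\pi_1}(t)-e^{-t^2/2}|$ to be at least $e^{-O(a^2)}$. Your argument stays entirely on the real line: you pair the measures against a translated bump $h$ supported just to the right of $[-a,a]$, use Fourier inversion to write $\int h\,d\pi=\frac{1}{2\pi}\int\hat h\,\Psi_\pi$, and exploit that translation leaves $\|\hat h\|_1$ unchanged. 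This is more elementary --- no complex analysis is needed --- and it makes the mechanism transparent: the characteristic functions must differ because $\pi_0$ puts mass $e^{-\Theta(a^2)}$ where $\pi_1$ puts none. The paper's route, on the other hand, is the natural warm-up for the Hadamard three-circles argument used in the proof of Theorem~\ref{th:approx}, so it fits the overall narrative better. One small caveat: your closing remark that the argument ``only uses $\supp\pi_1\subseteq[-a,a]$'' tacitly also needs $\pi_1$ to have finite total mass so that $\Psi_{\pi_1}$ is bounded and Fubini applies; the paper's proof has the analogous hidden assumption in the estimate $|L_{\pi_1}(z)|\le e^{a|\Re z|}$.
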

\begin{proof}
Let us define $B(z) \eqdef L_{\pi}(z) - e^{z^2/2}$, a holomorphic (entire) function on $\mathbb{C}$. Note that if
$\Re(z)=r$ then
	$$ |L_\pi(z)| \le e^{a|r|}, \qquad |e^{z^2/2}| \le e^{r^2/2}\,,$$
and thus for $r\ge 2a$
	\begin{equation}\label{eq:cf_2}
		b(r) \eqdef \sup\{|B(z)|: \Re(z) = r\} \le e^{ar} + e^{r^2/2} \le 2e^{r^2/2} 
\end{equation}	
On the other hand, for every $r \ge 3a, a\ge1$ we have 
	\begin{equation}\label{eq:cf_3}
		b(r) \ge |B(r)| \ge  e^{r^2/2} - e^{ar} \ge {1\over 2} e^{r^2/2}\,.
\end{equation}	
Applying the Hadamard three-lines theorem to $B(z)$, we conclude that $r \mapsto \log b(r)$ is convex and hence
	\begin{equation}\label{eq:cf_4}
		b(3a) \le (b(0))^{1/2} (b(6a))^{1/2}\,.
	\end{equation}	
Since the left-hand side of~\eqref{eq:cf_0} equals $b(0)$, \eqref{eq:cf_0} then follows from \eqref{eq:cf_2}-\eqref{eq:cf_4}.

For the converse part, in view of \eqref{eq:pi_trunc}, the total variation between $\pi_0$ and its conditional version $\pi_1$ is given by 
	$$ \int_{\mreals} |\phi_a(x) - \phi(x)| dx = 
	2 \|\pi_0-\pi_1\|_{\rm TV}= 2 \pi_0([-a,a]^c) = 	4Q(a)\,$$
	Therefore for the Fourier transform of $\phi_a-\phi$ we get
	$$ \sup_{t\in \mreals} |\Psi_{\pi_1}(t) - e^{-t^2/2}| \le 4Q(a) \le {4\over \sqrt{2\pi}a} e^{-a^2/2} \le
	2e^{-a^2/2}\,.$$
\end{proof}

Despite this evidence, it turns out that for the purpose of approximating Laplace transform in a neighborhood of $0$,
there is a much better approximation than~\eqref{eq:pi_trunc}.

\begin{theorem}\label{th:approx} 
There exists some constant $c>0$ such that for any probability measure $\pi$ supported on $[-a,a]$, $a\ge 1$,  we have
	\begin{equation}\label{eq:ap_lbnd}
		\sup_{|z| \le 1, z\in\mathbb{C}} | e^{z^2/2} - L_{\pi}(z)| \ge c e^{-c a^2 \log(a)}\,, 
\end{equation}	

	Furthermore, there exists an absolute constant $c_1>0$ so that for all $b\ge 1$ and all $a\ge 2c_1b$ there exists
	distribution $\pi$ (the Gauss-Hermite quadrature) supported on $[-a,a]$ such that 
	$$ \sup_{|z| \le b, z\in\complex} |e^{z^2/2} - L_{\pi}(z)| \le 3 (c_1 b/a)^{a^2/4}\,. $$
	Taking $b=1$ implies that the bound~\eqref{eq:ap_lbnd} is order-optimal.
\end{theorem}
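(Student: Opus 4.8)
\emph{Lower bound \eqref{eq:ap_lbnd}.} The plan is to rerun the complex-analytic argument of the Proposition, with the Hadamard three-\emph{circles} theorem in place of three-lines. Put $B(z)\eqdef L_\pi(z)-e^{z^2/2}$, an entire function, and $M(r)\eqdef\max_{|z|=r}|B(z)|$, so the left-hand side of \eqref{eq:ap_lbnd} is exactly $M(1)$. On the circle $|z|=r$ one has $|\Re z|\le r$, hence $|L_\pi(z)|\le e^{ar}$ and $|e^{z^2/2}|\le e^{r^2/2}$; testing the lower bound at the real point $z=r$ gives, just as in \eqref{eq:cf_2}--\eqref{eq:cf_3},
\[
\tfrac12\,e^{r^2/2}\ \le\ M(r)\ \le\ 2\,e^{r^2/2}\qquad(r\ge 3a,\ a\ge 1).
\]
Since $\log M$ is convex in $\log r$, applying three-circles at the radii $1<3a<6a$ and solving for $\log M(1)$ gives
\[
\log M(1)\ \ge\ \frac{\log(6a)\,\log M(3a)-\log(3a)\,\log M(6a)}{\log 2}\ \ge\ -C\,a^2\log a-C'\,a^2
\]
for absolute $C,C'$ (the $a^2\log a$ term is $\tfrac{(\frac92-18)a^2\log a}{\log 2}$ up to lower order). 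For $a\ge 2$ the $C'a^2$ is absorbed into $Ca^2\log a$, and for $1\le a\le 2$ the same display bounds $M(1)$ below by an absolute constant; either way \eqref{eq:ap_lbnd} follows after relabelling $c$.

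\emph{The construction.} Take $\pi=\sum_{i=1}^n w_i\delta_{x_i}$ to be the $n$-point Gauss--Hermite quadrature for the weight $\phi$: the atoms $x_i$ are the zeros of the monic Hermite polynomial $\mathrm{He}_n$ and the $w_i>0$ are chosen so that $\sum_i w_i\,p(x_i)=\int p\,\phi$ for every polynomial $p$ with $\deg p\le 2n-1$. Thus $\sum_i w_i=1$ (so $\pi$ is a probability measure) and the moments $\tilde m_k\eqdef\sum_i w_i x_i^k$ agree with the Gaussian moments $m_k\eqdef\int y^k\phi(y)\,dy$ for all $k\le 2n-1$. All zeros of $\mathrm{He}_n$ lie in $(-\sqrt{4n+2},\sqrt{4n+2})$ (equivalently, the classical bound that the zeros of the physicists' $H_n$ lie in $(-\sqrt{2n+1},\sqrt{2n+1})$, a Sturm comparison for $u''+(2n+1-x^2)u=0$), so setting $n\eqdef\floor{(a^2-2)/4}$ forces $\supp\pi\subset[-a,a]$; moreover $2n\ge a^2/4$ once $a$ exceeds an absolute constant (ensured by $a\ge 2c_1b$ with $c_1$ large).

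\emph{The error bound.} Expanding $e^{zy}=\sum_{k\ge0}(zy)^k/k!$ and integrating term by term (legitimate since $\int e^{|z|\,|y|}\phi<\infty$) and using the moment matching,
\[
e^{z^2/2}-L_\pi(z)\ =\ \sum_{k\ge0}\frac{z^k}{k!}(m_k-\tilde m_k)\ =\ \sum_{k\ge 2n}\frac{z^k}{k!}(m_k-\tilde m_k).
\]
Here $|\tilde m_k|\le\sum_i w_i|x_i|^k\le a^k$ and $|m_k|\le k^{k/2}$ (for even $k$, $|m_k|=(k-1)!!\le k^{k/2}$; odd $k$ vanishes), so with $|z|\le b$ and $k!\ge(k/e)^k$ each summand is at most $(eb/\sqrt k)^k+(eab/k)^k$. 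For $k\ge 2n\ge a^2/4$ one has $\sqrt k\ge a/2$ and $k\ge a^2/4$, whence each summand is $\le 2(4eb/a)^k$; taking $c_1\eqdef 8e$ and $a\ge 2c_1 b$ makes $4eb/a\le\tfrac14$, so the geometric tail sums to at most $3(4eb/a)^{2n}\le 3(4eb/a)^{a^2/4}\le 3(c_1b/a)^{a^2/4}$, which is the claim. With $b=1$ this is $3(c_1/a)^{a^2/4}=3\,e^{-(a^2/4)\log(a/c_1)}=e^{-\Theta(a^2\log a)}$, matching \eqref{eq:ap_lbnd}.

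\emph{Main difficulty.} The lower bound is a routine transplant of the Proposition's argument. The crux is the error analysis: the textbook Gauss remainder $\tfrac{n!}{(2n)!}f^{(2n)}(\xi)$ is useless for $f(y)=e^{zy}$, since $f^{(2n)}(\xi)=z^{2n}e^{z\xi}$ with $\xi$ ranging over all of $\mreals$, so one must discard it and exploit only that the nodes lie in $[-a,a]$ and that the rule is exact up to degree $2n-1$ — i.e. the moment-matching expansion above. Pinning down the zero-localization constant for $\mathrm{He}_n$ and the arithmetic needed to land on exactly $3(c_1b/a)^{a^2/4}$ is then bookkeeping.
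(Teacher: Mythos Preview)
Your proof is correct and follows essentially the same approach as the paper. The lower bound uses the Hadamard three-circles theorem applied to $B(z)=L_\pi(z)-e^{z^2/2}$ (you use radii $1,3a,6a$ where the paper uses $1,3a,5a$), and the upper bound uses the Gauss--Hermite quadrature together with the Taylor/moment-matching expansion and the elementary bounds $k!\ge(k/e)^k$, $|\tilde m_k|\le a^k$, $|m_k|\le k^{k/2}$; your choice $n=\lfloor(a^2-2)/4\rfloor$ differs only cosmetically from the paper's $k=\lceil a^2/8\rceil$.
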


\begin{remark}
\label{rmk:truncation}	
	When $\pi_1$ is given by the truncation~\eqref{eq:pi_trunc}, then performing explicit calculation for
	$z\in\reals$ we have
	\[
	L_{\pi_1}(z) = e^{z^2/2} \frac{\Phi(a+z)+\Phi(a-z)-1}{2\Phi(a)-1}\,.
	\]
	The same expression (by analytic continuation) holds for arbitrary $z\in \mathbb{C}$ if $\Phi(z)$ is understood
	as solution of $\Phi'(z) = \phi(z), \Phi(0)=1/2$. For $|z|=O(1)$, the approximation error is $e^{-\Omega(a^2)}$,
	rather than $e^{-\Omega(a^2 \log a)}$. The suboptimality of truncation is demonstrated on Fig.~\ref{fig:plot}.
\end{remark}

\begin{figure}[t]
\centering
\includegraphics[width=.6\textwidth]{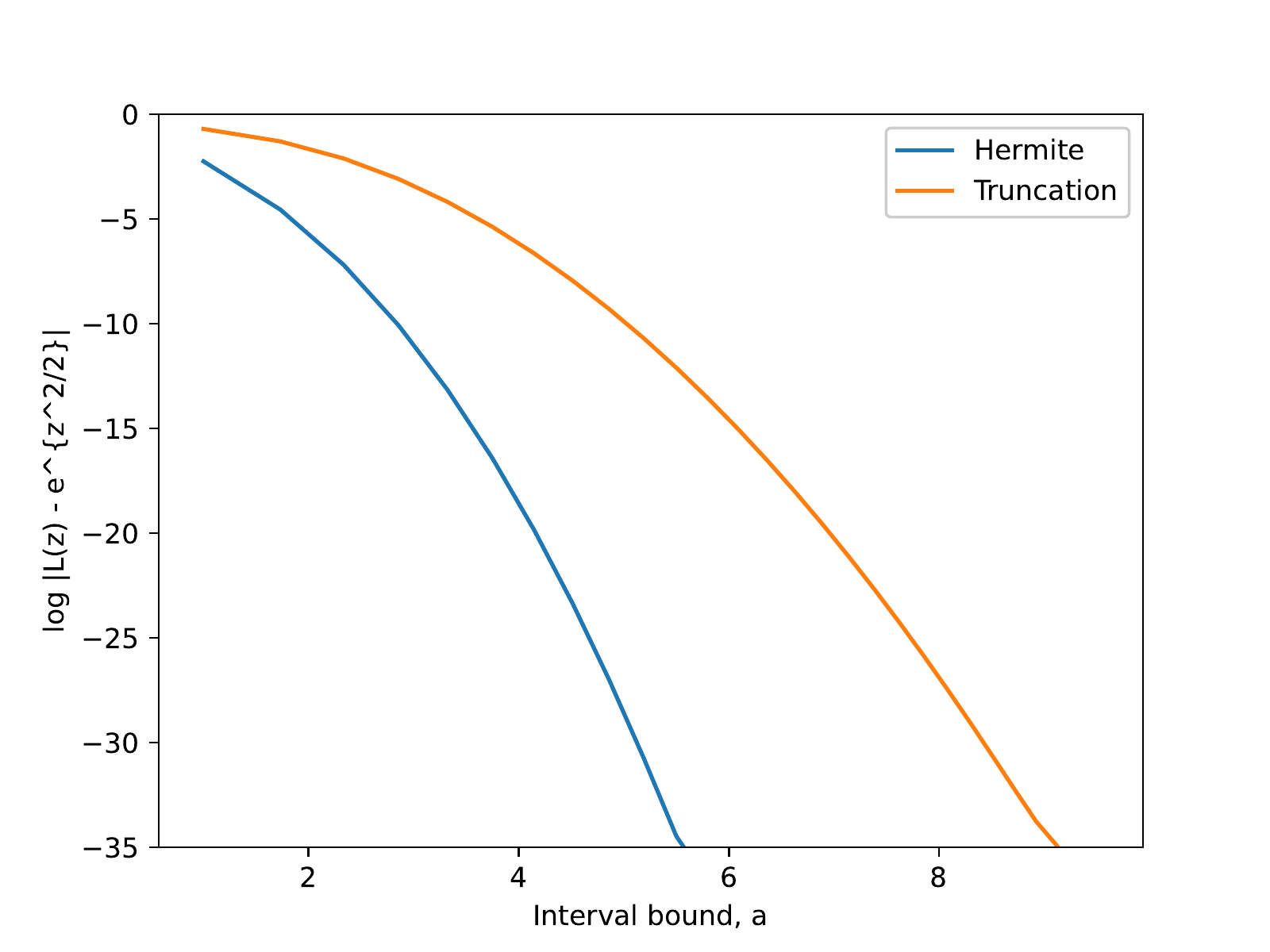}
\caption{Comparison of approximations of $\mathcal{N}(0,1)$ by distributions supported on $[-a,a]$, as measured by the
(log of) $L_\infty$ distance between the Laplace transform on the unit disk in $\mathbb{C}$.}
\label{fig:plot}
\end{figure}

\begin{proof}

	As above, denote $B(z) = L_{\pi}(z) - e^{z^2/2}$, and define 
	$$ M(r) = \sup_{|z| \le r, z\in \mathbb{C}} |B(z)|\,.$$
	From~\eqref{eq:cf_3} we have for any $r\ge 3a, a\ge 1$
		$$ M(r) \ge {1\over 2} e^{r^2/2}$$
	and from $|L_{\pi}(z)| \le e^{a|z|}$ we also have (for any $r\ge 2a$):
		$$ M(r) \le e^{ar} + e^{r^2/2} \le 2 e^{r^2/2}\,.$$
	Applying the Hadamard three-circles theorem, we have $\log r \mapsto \log M(r)$ is convex, and hence
		$$ M(3a) \le (M(1))^{1-\lambda} (M(5a))^{\lambda}\,,$$
		where $\lambda = {\log(5a)\over \log (3a)}$ and $1-\lambda \Theta({1\over \log a})$.
	From here we obtain for some constant $c>0$
		$$ \log M(1) \ge c (-a^2 -1) \log(a)\,,$$
	which proves~\eqref{eq:ap_lbnd}.

	For the upper bound, take $\pi$ to be the $k$-point Gauss-Hermite quadrature of $\mathcal{N}(0,1)$ (cf.~\cite[Section 3.6]{stoer.2002}). This is the unique $k$-atomic distribution that matches the first $2k-1$ moments of $\calN(0,1)$. Specifically, we have:
	\begin{itemize}
	\item  $\pi$ is supported on the roots of the degree-$k$ Hermite polynomial,	which lie in $[-\sqrt{4k+2},\sqrt{4k+2}]$ \cite[Theorem 6.32]{orthogonal.poly};
	\item The $i$-th moment of $\pi$, denoted by $m_i(\pi)$, satisfies $m_i(\pi)=m_i(N(0,1))$ for all $i=1,\ldots,2k-1$.
	\item $\pi$ is symmetric so that all odd moments are zero.
	\end{itemize}
We set $k=\lceil a^2/8 \rceil$, so that $\pi$ is supported on $[-a,a]$.

	Let us denote $X\sim \pi$ and $G \sim \mathcal{N}(0,1)$. By Taylor expansion we get
	$$ B(z) = \EE[e^{zX}] - \EE[e^{zG}] = \sum_{m=2k}^\infty {1\over m!} z^m (\EE[X^m] - \EE[G^m]) = \sum_{\ell = k}^\infty 
		{1\over (2\ell)!} z^{2\ell} (\EE[X^{2\ell}] - \EE[G^{2\ell}])\,.$$
	Now, we will bound $(2\ell)!\ge (2\ell/e)^{2\ell}$, $\EE[X^{2\ell}] \le a^{2\ell}$, 
	$\EE[G^{2\ell}] = (2\ell -1)!! \le (2\ell)^\ell$. This implies that for all $|z|\le b$ we have
	\begin{align*} |B(z)| &\le \sum_{\ell \ge k} \left\{\left(ea\over 2\ell\right)^{2\ell} + \left(e\over
	\sqrt{2\ell}\right)^{2\ell}\right\} |z|^{2\ell}\\
		&\le \sum_{\ell \ge k} \left\{\left(ea\over 2k\right)^{2\ell} + \left(e\over
	\sqrt{2k}\right)^{2\ell}\right\} |z|^{2\ell}\\
		&\le 2 \sum_{\ell \ge k} (c_1b/a)^{2\ell}\,,
	\end{align*}		
	where in the last step we used ${ea\over2k}, {e\over \sqrt{2k}} \le {c_1\over a}$ for some absolute
	constant $c_1$. In all, we have that whenever $c_1b/a<1/2$ we get
	$$ |B(z)| \le {2\over 1- (c_1b/a)^2} (c_1b/a)^{2k} \le 3 (c_1b/a)^{a^2/4}\,.$$

\end{proof}

\begin{remark}
	Note that our proof does not show that for any $\pi$ supported on $[-a,a]$, its characteristic function restricted on $[-1,1]$ must satisfy:
		$$ \sup_{|t|\le 1} |\Psi_\pi(t) - e^{-t^2/2}| \ge c e^{-ca^2 \log a}\,.$$
	It is natural to conjecture that this should hold, though. 

\end{remark}
\begin{remark} Note also that the Gauss-Hermite quadrature considered in the
	theorem, while essentially optimal on complex disks, is not \emph{uniformly} better than the naive truncation.
	For example, due to its finite support, the Gauss-Hermite quadrature is a very bad approximation in the sense
	of~\eqref{eq:cf_0}. Indeed, for any finite discrete distribution $\pi_1$ we have $\limsup_{t\to\infty}
	|\Psi_{\pi_1}(t)|=1$, thus only attaining the trivial bound of $1$ in the right-hand side of~\eqref{eq:cf_0}. 
	(To	see this, note that $\Psi_{\pi_1}(t) = \sum_{j=1}^k p_i e^{it\omega_j}$. By simultaneous rational
	approximation (see, e.g., \cite[Theorem VI, p.~13]{cassels72}), we have infinitely many values $q$ such that for
	all $j$ 
	$|q{\omega_j\over 2\pi} - p_j| < {1\over q^{1/k}}$ for some $p_j \in \mathbb{Z}$. In turn, this implies that 
	$\liminf_{t\to\infty} \max_{j=1,\ldots,m} ( t\omega_j \mod 2\pi) = 0$, and that $\Psi_{\pi_1}(t)\to1$ along the
	subsequence of $t$ attaining $\liminf$.)
\end{remark}

\section{Super-flat Gaussian mixtures}

As a corollary of construction in the previous section we can also derive a curious discrete distribution $\pi_2 =
\sum_{m} w_m \delta_{x_m}$ supported on $[-a,a]$ such that its convolution with the Gaussian kernel $\pi_2 * \phi$ is maximally flat near the
origin. More precisely, we have the following result.

\begin{corollary} There exist constants $C_1, C>0$ such that for every $a>0$ there exists $k=\Theta(a^2)$, $w_m \ge 0$
with $\sum_{m=1}^k w_m = 1$ and $x_m \in
[-a,a]$, $m\in\{1,\ldots,k\}$, such that
	$$ \left|\left({d\over dz}\right)^n \sum_{m=1}^k w_m \phi(z-x_m)\right| \le n! \cdot C_1 e^{-Ca^2 \log(a)} \qquad
	\forall z\in \mathbb{C}, |z| \le 1, n\in \{1,2,\ldots\}\,.$$
\end{corollary}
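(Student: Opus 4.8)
The plan is to obtain $\pi_2$ as an exponential \emph{tilt} of the Gauss--Hermite quadrature from Theorem~\ref{th:approx}. The key observation is the elementary identity, valid for all $z\in\complex$,
\[
 \sum_{m=1}^k w_m\,\phi(z-x_m)\;=\;\frac{e^{-z^2/2}}{\sqrt{2\pi}}\sum_{m=1}^k \bigl(w_m e^{-x_m^2/2}\bigr)e^{z x_m}.
\]
Up to the fixed factor $e^{-z^2/2}/\sqrt{2\pi}$, the mixture density is a finite exponential sum with ``masses'' $w_m e^{-x_m^2/2}$, so to make it nearly constant on the unit disk it suffices to make that exponential sum mimic $e^{z^2/2}$ there, and Theorem~\ref{th:approx} supplies exactly this. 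Invoking the upper-bound part of that theorem with $b=2$ (legitimate once $a\ge 4c_1$), I get a $k$-atomic $\pi^\ast=\sum_m v_m\delta_{x_m}$, $k=\lceil a^2/8\rceil=\Theta(a^2)$, with $x_m\in[-a,a]$, $v_m\ge 0$, $\sum_m v_m=1$, such that $B(z)\eqdef L_{\pi^\ast}(z)-e^{z^2/2}$ satisfies $|B(z)|\le\epsilon\eqdef 3(2c_1/a)^{a^2/4}=e^{-\Theta(a^2\log a)}$ on $\{|z|\le 2\}$. I then set
\[
 Z\eqdef\sum_{m=1}^k v_m e^{x_m^2/2}\ \ge\ 1,\qquad w_m\eqdef\frac{v_m e^{x_m^2/2}}{Z}\ \ge\ 0,
\]
which are legitimate mixture weights ($\sum_m w_m=1$) on atoms in $[-a,a]$, and which satisfy $w_m e^{-x_m^2/2}=v_m/Z$.

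With this choice the identity collapses the mixture density to
\[
 f(z)\eqdef\sum_{m=1}^k w_m\phi(z-x_m)=\frac{e^{-z^2/2}}{\sqrt{2\pi}\,Z}\,L_{\pi^\ast}(z)=\frac{1}{\sqrt{2\pi}\,Z}\bigl(1+e^{-z^2/2}B(z)\bigr).
\]
The leading term is constant and is therefore annihilated by $(d/dz)^n$ for every $n\ge 1$, so $f^{(n)}(z)=(\sqrt{2\pi}\,Z)^{-1}(d/dz)^n\!\bigl[e^{-z^2/2}B(z)\bigr]$. The function $h\eqdef e^{-z^2/2}B$ is entire and obeys $|h(w)|\le e^{|w|^2/2}\epsilon\le e^{2}\epsilon$ on $|w|\le 2$, so Cauchy's derivative estimate on the unit circle centred at any $z$ with $|z|\le 1$ (which stays inside $\{|w|\le 2\}$) yields $|h^{(n)}(z)|\le n!\,e^{2}\epsilon$. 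Since $Z\ge\sum_m v_m=1$ forces $(\sqrt{2\pi}\,Z)^{-1}\le 1$, we conclude
\[
 \bigl|f^{(n)}(z)\bigr|\ \le\ n!\,e^{2}\epsilon\qquad\text{for all }|z|\le 1,\ n\ge 1 .
\]
Since $\epsilon=3(2c_1/a)^{a^2/4}=e^{-\Theta(a^2\log a)}$, one can fix any $C<\tfrac14$ and then a threshold $a_0\ge 4c_1$ and a constant $C_1$ so that $e^{2}\epsilon\le C_1 e^{-Ca^2\log a}$ for all $a\ge a_0$ (the exponent $a^2[\tfrac14\log(2c_1)+(C-\tfrac14)\log a]\to-\infty$, so such a threshold and constant exist), which gives the corollary in that range with $k=\lceil a^2/8\rceil$.

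For the leftover range $0<a<a_0$ the right-hand side $n!\,C_1 e^{-Ca^2\log a}$ is at least $n!$ times a fixed positive constant, so the trivial construction (take $k=\lceil a^2\rceil$, padded if needed with zero-weight atoms, and put all the mass on a single atom at the origin, i.e.\ $f=\phi$) already works after enlarging $C_1$, using $|\phi^{(n)}(z)|\le n!\,e^{2}/\sqrt{2\pi}$ on $|z|\le 1$. The one step requiring genuine care --- and the conceptual reason the scheme works --- is the tilt by $e^{x_m^2/2}$: these factors can individually be as large as $e^{a^2/2}$, yet they are all absorbed into the \emph{single} scalar normalizer $Z\ge 1$, which enters the final bound only through the harmless multiplier $(\sqrt{2\pi}\,Z)^{-1}\le 1$; consequently none of that growth propagates to the derivative estimate and the entire smallness is supplied by $\epsilon$ from Theorem~\ref{th:approx}. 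A lesser point is that, since we need control on the \emph{closed} unit disk, we must invoke Theorem~\ref{th:approx} on a slightly larger disk ($b=2$ above), which costs nothing in the exponential rate.
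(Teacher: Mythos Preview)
Your proof is correct and follows essentially the same route as the paper's: apply Theorem~\ref{th:approx} with $b=2$, exponentially tilt the Gauss--Hermite weights via $w_m\propto v_m e^{x_m^2/2}$ so that the mixture density becomes a constant plus $e^{-z^2/2}B(z)$ (up to the factor $(\sqrt{2\pi}Z)^{-1}\le 1$), and then bound the derivatives by Cauchy's estimate on the larger disk. Your write-up is in fact a touch more careful than the paper's (you track the $\sqrt{2\pi}$ from $\phi$ and explicitly dispose of the small-$a$ range, which the paper glosses over).
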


\begin{proof} Consider the distribution $\pi = \sum_{m=1}^k \tilde w_m \delta_{x_m}$ claimed by Theorem~\ref{th:approx}
for $b=2$. Then (here and below $C$ designates some absolute constant, possibly different in every occurence) we have
	$$ \sup_{|z| \le 2} |L_\pi(z) - e^{z^2/2}| \le C e^{-C a^2 \log (a)}\,.$$
Note that the function $e^{-z^2/2}$ is also bounded on $|z|\le 2$ and thus we have
	$$ \sup_{|z| \le 2} |L_\pi(z)e^{-z^2/2} - 1| \le C e^{-C a^2 \log (a)}\,.$$
By Cauchy formula, this also implies that derivatives of the two functions inside $|\cdot|$ must satisfy the same
estimate on a smaller disk, i.e.
	\begin{equation}\label{eq:fl_1}
		\sup_{|z| \le 1} \left|\left({d\over dz}\right)^n L_\pi(z)e^{-z^2/2}\right| \le n! \cdot C e^{-C a^2 \log (a)}\,.
\end{equation}	
Now, define $w_m = {1\over B}\tilde w_m e^{x_m^2/2}$, where $B = \sum_m \tilde w_m e^{x_m^2/2}$. We then have an
identity:
$$L_{\pi}(z)e^{-z^2/2}=B \sum_m w_m e^{-(z-x_m)^2/2} $$ 
Plugging this into~\eqref{eq:fl_1} and noticing that $B\ge 1$ we get the result.
\end{proof}

\begin{remark} This corollary was in fact the main motivation of this note. More exactly, in the study of the properties
of non-parametric maximum-likelihood estimation of Gaussian mixtures, we conjectured that \textit{certain
mixtures} must possess some special $z_0$ in the unit disk on $\mathbb{C}$ such that $\left|\sum_m w_m
\phi'(z_0-x_m)\right| \ge e^{-O(a^2)}$. The stated corollary shows that this is not true for \textit{all
mixtures}. See~\cite[Section 5.3]{PW20-npmle} for more details on why lower-bounding the derivative is important. In
particular, one \textit{open question} is whether the lower bound $e^{-O(a^2)}$ holds (with high probability) for the case when
$a\asymp \sqrt{\log k}$ and $x_m, m\in\{1,\ldots,k\}$, are iid samples of $\mathcal{N}(0,1)$, while $w_m$'s can be chosen
arbitrarily given $x_m$'s.
\end{remark}

\end{document}